\newcommand{\includesvg}[2][scale=1]{\includegraphics[#1]{#2.pdf}}
\definecolor{Set1.red}{rgb}{0.894117647058824,0.101960784313725,0.109803921568627}
\definecolor{Set1.blue}{rgb}{0.215686274509804,0.494117647058824,0.721568627450980}
\definecolor{Set1.green}{rgb}{0.301960784313725,0.686274509803922,0.290196078431373}
\definecolor{Set1.purple}{rgb}{0.596078431372549,0.305882352941177,0.639215686274510}
\definecolor{Set1.orange}{rgb}{1,0.498039215686275,0}
\definecolor{Set1.yellow}{rgb}{1,1,0.2}
\definecolor{Set1.brown}{rgb}{0.650980392156863,0.337254901960784,0.156862745098039}
\definecolor{Set1.pink}{rgb}{0.968627450980392,0.505882352941176,0.749019607843137}
\definecolor{Set1.grey}{rgb}{0.6,0.6,0.6}
\newtheorem{assumption}{Assumption}
\DeclareMathOperator{\diag}{diag}
\newcommand{\vect}[2]{\ensuremath{[\begin{array}{#1} #2 \end{array}]}}
\newcommand{\norm}[1]{\ensuremath{\left\| #1 \right\|}}
\DeclareMathOperator*{\argmin}{argmin}
\DeclareMathOperator*{\minimize}{minimize}
\DeclareMathOperator{\subjectto}{subject\ to}
\newcommand{\calL}{\ensuremath{\mathcal{L}}}
\newcommand{\bzero}{\ensuremath{\bm{0}}}
\newcommand{\bA}{\ensuremath{\bm{A}}}
\newcommand{\bB}{\ensuremath{\bm{B}}}
\newcommand{\bI}{\ensuremath{\bm{I}}}
\newcommand{\bM}{\ensuremath{\bm{M}}}
\newcommand{\bQ}{\ensuremath{\bm{Q}}}
\newcommand{\bR}{\ensuremath{\bm{R}}}
\newcommand{\bLambda}{\ensuremath{\bm{\Lambda}}}
\newcommand{\be}{\ensuremath{\bm{e}}}
\newcommand{\bg}{\ensuremath{\bm{g}}}
\newcommand{\bp}{\ensuremath{\bm{p}}}
\newcommand{\bs}{\ensuremath{\bm{s}}}
\newcommand{\bu}{\ensuremath{\bm{u}}}
\newcommand{\bv}{\ensuremath{\bm{v}}}
\newcommand{\bx}{\ensuremath{\bm{x}}}
\newcommand{\bz}{\ensuremath{\bm{z}}}
\newcommand{\blambda}{\ensuremath{\bm{\lambda}}}
\newcommand{\bmu}{\ensuremath{\bm{\mu}}}
\newcommand{\bbR}{\ensuremath{\mathbb{R}}}
\def\st/{\textsuperscript{st}}
\def\nd/{\textsuperscript{nd}}
\def\rd/{\textsuperscript{rd}}
\def\th/{\textsuperscript{th}}
\newcommand{\setR}{\bbR}
\newcommand{\ones}{\ensuremath{\bm{\mathds{1}}}}
\def\nnil{\nil}
\newcounter{prob}
\newenvironment{prob}[1][\nil]{%
	\def\tmp{#1}
	\equation
	\ifx\tmp\nnil
		\refstepcounter{prob}
		\tag{P\Roman{prob}}
	\else
		\tag{\tmp}
	\fi
	\aligned%
}{%
	\endaligned\endequation%
}
\newenvironment{prob*}{%
	\csname equation*\endcsname%
	\aligned%
}{%
	\endaligned%
	\csname endequation*\endcsname%
}
\title[Counterfactual Programming for Optimal Control]{Counterfactual Programming for Optimal Control}
\author{%
\Name{Luiz {F. O. Chamon}} \Email{luizf@seas.upenn.edu}\\
\Name{Santiago Paternain} \Email{spater@seas.upenn.edu}\\
\Name{Alejandro Ribeiro} \Email{aribeiro@seas.upenn.edu}\\
\addr Electrical and Systems Engineering, University of Pennsylvania.%
}
\begin{document}

\maketitle

\begin{abstract}
In recent years, considerable work has been done to tackle the issue of designing control laws based on observations to allow unknown dynamical systems to perform pre-specified tasks. At least as important for autonomy, however, is the issue of learning which tasks can be performed in the first place. This is particularly critical in situations where multiple~(possibly conflicting) tasks and requirements are demanded from the agent, resulting in infeasible specifications. Such situations arise due to over-specification or dynamic operating conditions and are only aggravated when the dynamical system model is learned through simulations. Often, these issues are tackled using regularization and penalties tuned based on application-specific expert knowledge. Nevertheless, this solution becomes impractical for large-scale systems, unknown operating conditions, and/or in online settings where expert input would be needed during the system operation. Instead, this work enables agents to autonomously pose, tune, and solve optimal control problems by compromising between performance and specification costs. Leveraging duality theory, it puts forward a counterfactual optimization algorithm that directly determines the specification trade-off while solving the optimal control problem.
\end{abstract}

\begin{keywords}
Autonomous systems, optimal control, constrained optimization, feasibility, constraint learning.
\end{keywords}

\section{INTRODUCTION}\label{S:intro}

Autonomous systems are machines that can modify their behavior in response to unforeseen events and/or operating conditions. They are~(or are set to become) key tools in robotics~\citep{Kober13r} and smart~(grid, home, city) applications~\citep{Gharaibeh17s}. One important aspect of autonomy that has attracted considerable attention in recent years is that of learning to perform tasks in uncertain or unknown environments. Here, the agent has limited~(or no) knowledge of the underlying dynamical system and operational conditions and must design control laws to perform a pre-specified task based only on observations. System identification~\citep{Johansson93s}, adaptive control~\citep{Kokotovic91f}, and reinforcement learning~\citep{Sutton18r}, to name a few, have been used to address these issues.

Though learning \emph{how} to perform tasks is paramount to achieve autonomous behavior, at least as important is the issue of learning \emph{which} tasks can be performed in the first place. This decision-making aspect of autonomy is both fundamental and challenging, especially when agents must make decisions that \emph{violate} their specifications. This is critical when multiple tasks and constraints are simultaneously required from the agent, resulting in infeasible settings. These situations arise due to over-specification, scenario uncertainty, or changing operating conditions, and are only aggravated when dynamical system models are learned through simulations.

When faced with the problem of dealing with~(possibly conflicting) goals and requirements, agents are often designed using domain expert knowledge and/or prior information to weight and combine multiple objectives into a single cost, leading to so-called \emph{regularized problems}. Although solutions obtained using this approach are often Pareto optimal~\citep{Ehrgott05m, Boyd04c}, i.e., none is better than the others in every aspect, their performance in practice can vary widely. Thus the importance of properly tuning the regularization parameters, a task that is both application- and scenario-specific. This well-known issue has lead to several alternative formulations and heuristics~\citep{Miettinen98n, Das98n, Messac03t, Mueller-Gritschneder09a}. Alternatively, chance constraints can be used to relax the hard requirements by imposing a minimum probability of satisfying them, an approach often used in controlled Markov decision processes~\citep{Howard72r, Geibel05r, Paternain19s, Paternain19c}, optimal control~\citep{Caillau18s, Ono10c}, and model predictive control~\citep{Schwarm99c, Li00r}.

The main issue with these approaches is that they heavily rely on expert knowledge to inform the solution of multi-objective problems. Hence, adapting to the varying trade-offs of non-stationary environments would require additional input from experts during the system operation, renouncing on a key component of autonomy. What is more, designing regularization parameters or chance constraints that encode trade-off preferences is a challenging task in and of itself. Since properly encoding this information may be difficult, it may lead to solutions with poor practical performance despite their theoretical optimality.

This work provides a systematic approach to tune the agent specifications according to the inherent trade-offs of the underlying dynamical system and operating environment. This approach leverages counterfactuals, conditional statements that describe alternative versions of the world, to interrogate optimal control problems as to what would happen if the specifications had been different. This counterfactual evidence can then be used to automatically tune compromises and update requirements in a principled way without having to repeatedly solve different versions of the control problem. To do so, it first puts forward a mathematical formulation of compromise~(Section~\ref{S:problem}) and shows that Lagrange multipliers can be used to counterfactually determine specifications that balance performance gains and costs~(Section~\ref{S:counterfactual}). This result leads to a low complexity saddle point algorithm that simultaneously solves the original optimization problem while tuning its compromises~(Section~\ref{S:arrow}). This method enables, for instance, an agent to autonomously pose control problems adapted to its operating conditions~(Section~\ref{S:sims_lqr}).

\section{PROBLEM FORMULATION}
	\label{S:problem}

Let~$f_0: \setR^n \to \setR$ be a performance metric we wish to optimize while abiding by a set of requirements determined by the functions~$f_i: \setR^n \to \setR$ and the specifications~$s_i \geq 0$, $i = 1,\dots,m$. These requirements may represent tasks that the agent must perform, specify system limitations and/or available resources, or describe desired behaviors~(see example in Section~\ref{S:sims_lqr}). Formally, we consider the optimization program
\begin{prob}\label{P:relaxed}
	p^\star(\bs) \triangleq \min_{\bz \in \mathbb{R}^n}& &&f_0(\bz)
	\\
	\text{s. to}& &&f_i(\bz) \leq s_i
		\text{,} \quad i = 1,\dots,m
		\text{,}
\end{prob}
where the vector~$\bs \in \setR_+^m$ collects the specifications~$s_i$. Problem~\eqref{P:relaxed} explicitly captures the trade-off between performance and requirements. Indeed, let~$\bs = \bzero$ denote a nominal, reference specification and denote its performance by~$p^\star = p^\star(\bzero)$. Then, if~$s_i > 0$, the~$i$-th requirement becomes easier to satisfy and the performance~$p^\star(\bs)$ of the solution improves accordingly. Naturally, there is a cost associated with modifying requirements that must be taken into account when adjusting~$\bs$. To do so, let~$h: \setR_+^m \to \setR_+$ be a non-decreasing function in each argument, so that~$h(\bs)$ describes, in units of performance~($f_0$), the cost associated with specification~$\bs$. Without loss of generality, we assume the nominal specification to be free, i.e., $h(\bzero) = 0$.

Our goal is to tune~$\bs$ so as to trade off the specification cost~$h(\bs)$ and the performance improvement~$p^\star - p^\star(\bs)$, i.e., to find specifications that cost at most as much as they enhance performance. Formally, we seek~$\bs^\dagger$ such that
\begin{equation}\label{E:special_compromise}
	p^\star - p^\star(\bs^\dagger) \geq h(\bs^\dagger)
		\text{.}
\end{equation}
Though useful, \eqref{E:special_compromise} does not consider the case in which the nominal requirements are either conflicting or so stringent that~\eqref{P:relaxed} is infeasible, i.e., there is no~$\bz^\prime \in \setR^n$ such that~$f_i(\bz^\prime) \leq 0$ for all~$i$. In this case, we define~$p^\star(\bzero) = +\infty$. Under these circumstances, we wish to fall back into a laxer notion of feasibility by choosing a different reference specification in~\eqref{E:special_compromise}. Since this choice is arbitrary, we require instead that the compromise~$\bs^\dagger$ holds for all valid references, i.e.,
\begin{equation}\label{E:compromise}
	p^\star(\bs_0) - p^\star(\bs^\dagger) \geq h(\bs^\dagger) - h(\bs_0)
		\text{,}
\end{equation}
for all~$\bs_0 \in \setR^m_+$ such that~\eqref{P:relaxed} is feasible, i.e., such that~$p^\star(\bs_0) < +\infty$. We call~$\bs^\dagger$ a compromise because it describes a specification of~\eqref{P:relaxed} that trades off performance improvement and cost in a manner that is more profitable that any other feasible specification. Observe that the compromise in~\eqref{E:compromise} takes into account not only the specification cost, but also how hard the requirement is to satisfy in the first place. To see this is the case, note that the left-hand side of~\eqref{E:special_compromise} is a measure of constraint satisfaction difficulty for the nominal specification of~\eqref{P:relaxed}. Indeed, for a fixed specification budget~$\delta > 0$, the value~$\Delta_i = p^\star - p^\star(\delta \be_i)$, where~$\be_i$ is the~$i$-th vector of the canonical basis, quantifies the performance enhancement obtained by relaxing the~$i$-th requirement.

It is not immediate from~\eqref{P:relaxed} that the equilibrium~\eqref{E:compromise} exists and, if it does, whether it can be determined efficiently. Indeed, note that~\eqref{E:compromise} involves the optimal value of~\eqref{P:relaxed}. If finding~$\bs^\dagger$ involves repeatedly solving~\eqref{P:relaxed}, the computational costs would hinder the usefulness of this approach in practice, specially in online settings~[e.g., for MPC~\citep{Borrelli17p, Rawlings17m}]. In the next section, we show that under mild conditions, the compromise~$\bs^\dagger$ in~\eqref{E:compromise} exists and can be determined counterfactually, i.e., without repeatedly solving~\eqref{P:relaxed}. Based on these results, we then put forward a modified Arrow-Hurwicz algorithm that directly solves~\eqref{P:relaxed} for~$\bs^\dagger$ without checking multiple specifications~(Section~\ref{S:arrow}).

Before proceeding, we introduce a pertinent remark addressing a common alternative approach for dealing with the trade-off of multiple objectives.

\begin{remark}
A typical approach for dealing with multi-objective optimization problems, such as requirement-performance trade-offs, and infeasibility is to use regularization, wherein~\eqref{P:relaxed} is replaced by~$\min_{\bz \in \setR^n}\ f_0(\bz) + \sum_{i=1}^m \gamma_i f_i(\bz)$ for regularization parameters~$\gamma_i \geq 0$~\citep{Miettinen98n, Ehrgott05m}. However, whereas~\eqref{E:compromise} explicitly states the trade-off of interest, this compromise is implicit in the regularized version through the relation among the~$\gamma_i$ and between the~$\gamma_i$ and the performance objective~$f_0$. Since there is typically no straightforward way to tune~$\gamma_i$, they are typically selected by domain experts by trial-and-error or computationally intensive grid searches~(e.g., cross-validation). In contrast, Algorithm~\ref{A:saddle} can be used to efficiently determine~$\bs^\dagger$.
\end{remark}

\section{LAGRANGE MULTIPLIERS AS COUNTERFACTUALS}
\label{S:counterfactual}

In the end of Section~\ref{S:problem}, we argued that finding~$\bs^\dagger$ in~\eqref{E:compromise} is only feasible in practice if it does not involve repeatedly solving~\eqref{P:relaxed}. Our goal in this section is to extract counterfactual evidence from~\eqref{P:relaxed} to tune its requirements so as to achieve~\eqref{E:compromise} without testing multiple specifications. A \emph{counterfactual} is a conditional proposition in which the premise is false and the consequent describes how the world would have been \emph{if the premise were true}~\citep{Pearl09c, Woodward05m, Lewis74c}. It is immediate that~\eqref{P:relaxed} yields counterfactuals of the form ``if the requirements had been~$\bs$, then the optimal performance value would have been~$p^\star(\bs)$.'' This causal relation is represented by the blue arrow in Fig.~\ref{F:counterfactual}. Less straightforward is the fact that~\eqref{P:relaxed} can also provide counterfactual evidence for the trade-off~\eqref{E:compromise}~(red arrows in Fig.~\ref{F:counterfactual}). Next, we show that this evidence comes from the dual variables of~\eqref{P:relaxed}, which allows us to derive an algorithm that directly solves~\eqref{P:relaxed} for~$\bs^\dagger$.

To proceed, start by defining the Lagrangian associated with~\eqref{P:relaxed} as
\begin{equation}\label{eqn_lagrangian}
	\calL(\bx,\blambda,\bs) = f_0(\bx) + \sum_{i = 1}^m \lambda_i f_i(\bx)
		- \blambda^T \bs
		\text{,}
\end{equation}
where~$\blambda \in \setR_+^m$ collects the dual variables~$\lambda_i \geq 0$ for~$i = 1,\dots,m$. Likewise, define its dual function as~$g(\blambda,\bs) = \min_{\bx\in\setR^n} \calL(\bx,\blambda,\bs)$. The dual function is a lower bound on~$p^\star(\bs)$ for all~$\blambda,\bs \in \setR_+^m$. The dual value is the best of these lower bounds, namely~$d^\star(\bs) \triangleq \max_{\blambda \in \setR_+^m} g(\blambda,\bs)$. Under Assumptions~\ref{assumption_convexity} and~\ref{assumption_slater} below, this best lower bound can be shown to attain~$p^\star(\bs)$, i.e., $d^\star(\bs) = p^\star(\bs)$. In duality theory, this is known as \emph{strong duality}~\citep[Ch.~5]{Boyd04c}.

\begin{assumption}\label{assumption_convexity}
The~$f_i$ and~$h$ are differentiable and convex and~$f_0$ is differentiable and strongly convex.
\end{assumption}

\begin{assumption}\label{assumption_slater}
The set $\left\{(\bx^\prime,\bs^\prime) \in \setR^n \times \setR_+^m \mid f_i(\bx^\prime) < \bs^\prime \text{ for } i = 1,\dots,m \right\}$ is not empty.
\end{assumption}

Under these conditions, it is well-known that the optimal dual variable~$\lambda_i^\star(\bs)$ of~\eqref{P:relaxed} identifies how hard the $i$-th constraint is to satisfy. Specifically, they locally quantify how much the objective would change if the constraint were tightened or relaxed. The following theorem provides an additional counterfactual property by showing that it also uniquely identifies the compromise~\eqref{E:compromise}.

\begin{theorem}\label{T:counterfactual}
Let~$\blambda^\star(\bs)$ be the dual variables of problem~\eqref{P:relaxed} with slack~$\bs$ and~$\bs^\dagger$ be the compromise in~\eqref{E:compromise}. Under Assumptions~\ref{assumption_convexity} and~\ref{assumption_slater},
\begin{equation}\label{E:counterfactual}
	\nabla h(\bs) = \blambda^\star(\bs) \Leftrightarrow \bs = \bs^\dagger
		\text{.}
\end{equation}
\end{theorem}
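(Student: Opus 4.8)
The plan is to use the envelope theorem / sensitivity interpretation of Lagrange multipliers to convert the definition of $\bs^\dagger$ into a statement about the gradient of the value function $p^\star$. Under Assumptions~\ref{assumption_convexity} and~\ref{assumption_slater}, strong duality holds and the perturbation function $p^\star(\cdot)$ is convex and differentiable with $\nabla p^\star(\bs) = -\blambda^\star(\bs)$; moreover $f_0$ strongly convex makes the primal minimizer unique, so $\blambda^\star(\bs)$ is well defined. The key observation is that the compromise condition~\eqref{E:compromise}, which reads $p^\star(\bs_0) - p^\star(\bs^\dagger) \geq h(\bs^\dagger) - h(\bs_0)$ for all feasible $\bs_0$, is exactly the statement that the convex function $\phi(\bs) \triangleq p^\star(\bs) + h(\bs)$ attains its minimum at $\bs = \bs^\dagger$ over the feasible region $\{\bs \in \setR_+^m : p^\star(\bs) < +\infty\}$. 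So the theorem will follow from the first-order optimality condition for $\phi$.

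\textbf{The two directions.}
First I would argue ($\Leftarrow$): if $\bs = \bs^\dagger$, then by definition $\phi(\bs_0) \geq \phi(\bs^\dagger)$ for all feasible $\bs_0$. Since $\bs^\dagger$ is an interior minimizer of the convex differentiable $\phi$ (one must check it is not on the boundary $\bs_i = 0$ — this is where Slater and $h(\bzero)=0$, $h$ nondecreasing come in, guaranteeing the unconstrained minimizer is nonnegative), stationarity gives $\nabla p^\star(\bs^\dagger) + \nabla h(\bs^\dagger) = \bzero$, i.e. $\nabla h(\bs^\dagger) = -\nabla p^\star(\bs^\dagger) = \blambda^\star(\bs^\dagger)$. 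For ($\Rightarrow$): if $\nabla h(\bs) = \blambda^\star(\bs) = -\nabla p^\star(\bs)$, then $\nabla \phi(\bs) = \bzero$, and since $\phi$ is convex this $\bs$ is a global minimizer of $\phi$ over $\setR_+^m$, hence satisfies~\eqref{E:compromise}; by definition it is (a) compromise. Uniqueness of $\bs^\dagger$ — needed to conclude $\bs = \bs^\dagger$ rather than merely ``$\bs$ is a compromise'' — comes from strict convexity of $\phi$, which follows from strong convexity of $f_0$ propagating to $p^\star$ (the value function inherits strong convexity from a strongly convex objective under these constraints), so the minimizer of $\phi$ is unique.

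\textbf{Main obstacle.}
The delicate point is establishing the regularity properties of $p^\star$: differentiability of the perturbation function with $\nabla p^\star(\bs) = -\blambda^\star(\bs)$, and the fact that it inherits (strong) convexity from $f_0$. Differentiability of a convex function at a point is equivalent to uniqueness of its subgradient, and the subdifferential of $p^\star$ at $\bs$ is exactly $-\{\text{optimal dual variables at } \bs\}$; so one needs the optimal dual variable to be unique, which follows from Slater's condition together with the structure of the problem (the feasible set has nonempty interior in the perturbed variable). I would cite the standard result (e.g. \citealp[Ch.~5]{Boyd04c}) that under strong duality the set of optimal dual variables equals $-\partial p^\star(\bs)$, and then argue uniqueness. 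The strong convexity of $p^\star$ requires a short argument: given $\bs_1, \bs_2$ with minimizers $\bz_1, \bz_2$, the point $\theta \bz_1 + (1-\theta)\bz_2$ is feasible for specification $\theta \bs_1 + (1-\theta)\bs_2$ (since each $f_i$ is convex), so $p^\star(\theta\bs_1 + (1-\theta)\bs_2) \leq f_0(\theta\bz_1+(1-\theta)\bz_2) \leq \theta f_0(\bz_1) + (1-\theta)f_0(\bz_2) - \tfrac{\mu}{2}\theta(1-\theta)\norm{\bz_1-\bz_2}^2$; some care is needed because the strong-convexity gain is in terms of $\norm{\bz_1 - \bz_2}$ rather than $\norm{\bs_1 - \bs_2}$, so to get genuine strong convexity of $p^\star$ in $\bs$ one may instead only need strict convexity, which already suffices for uniqueness of $\bs^\dagger$. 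I would structure the writeup so that only strict convexity of $p^\star$ is actually invoked for uniqueness, sidestepping the quantitative issue.
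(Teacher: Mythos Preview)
Your proposal is correct and follows essentially the same route as the paper: both arguments recognize that~\eqref{E:compromise} is equivalent to $\bs^\dagger$ minimizing the convex function $\phi(\bs)=p^\star(\bs)+h(\bs)$, invoke the sensitivity identity $\nabla p^\star(\bs)=-\blambda^\star(\bs)$, and reduce the biconditional to the first-order optimality condition $\nabla\phi=\bzero$. Your treatment of the regularity issues (differentiability of $p^\star$ via uniqueness of the optimal dual, strict versus strong convexity for uniqueness of $\bs^\dagger$, and the boundary question) is in fact more careful than the paper's, which dispatches these points by citation to \citet{Shapiro00d,Bertsekas09c} and \citet[Sec.~5.6.3]{Boyd04c} and simply asserts that $q=p^\star+h$ is strongly convex.
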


\begin{proof}
We start by proving necessity~($\Rightarrow$). Note that under Assumptions~\ref{assumption_convexity} and~\ref{assumption_slater}, $h$ and~$p^\star$ are convex, differentiable functions. Indeed, $p^\star$ is the \emph{perturbation function} of~\eqref{P:relaxed}, a strongly dual, convex program with a strongly convex objective~\citep{Shapiro00d, Bertsekas09c}. Hence, for all~$\bs_0,\bs \in \setR_+^m$ such that~$p^\star(\bs_0) < +\infty$ and~$p^\star(\bs) < +\infty$ it holds that
\begin{equation}\label{E:convex_inequalities}
	p^\star(\bs_0) \geq p^\star(\bs) + \nabla p^\star(\bs)^T (\bs_0 - \bs)
	\quad \text{and} \quad
	h(\bs_0) \geq h(\bs) + \nabla h(\bs)^T (\bs_0 - \bs)
		\text{,}
\end{equation}
which add up to
\begin{equation}\label{E:summed_inequalities}
\begin{aligned}
	p^\star(\bs_0) - p^\star(\bs) &\geq h(\bs) - h(\bs_0)
		+ \left[ \nabla p^\star(\bs) + \nabla h(\bs) \right]^T (\bs_0 - \bs)
		\text{.}
\end{aligned}
\end{equation}
Using the fact that~$\blambda^\star(\bs) = -\nabla p^\star(\bs)$ for all~$\bs \in \setR_+^m$ so that~\eqref{P:relaxed} is feasible~\citep[Section 5.6.3]{Boyd04c}, the hypothesis from~\eqref{E:counterfactual} yields~$\nabla p^\star(\bs) + \nabla h(\bs) = \bzero$. Hence, \eqref{E:summed_inequalities} imply~\eqref{E:compromise} and we obtain that~$\bs = \bs^\dagger$.

Sufficiency in~\eqref{E:counterfactual}~($\Leftarrow$) stems from the fact that~\eqref{E:compromise} can be rearranged into~$p^\star(\bs^\dagger) + h(\bs^\dagger) \leq p^\star(\bs_0) + h(\bs_0)$. Hence, $\bs^\dagger$ must be the unique minimizer of the strongly convex function~$q(\bs) = p^\star(\bs) + h(\bs)$, so that~$\nabla p^\star(\bs^\dagger) + \nabla h(\bs^\dagger) = \bzero$. Once again using the fact that~$\blambda^\star(\bs) = -\nabla p^\star(\bs)$ for all~$\bs$~\cite[Section 5.6.3]{Boyd04c} concludes the proof.
\end{proof}

Theorem~\ref{T:counterfactual} replaces the global compromise between~$p^\star$ and~$h$ in~\eqref{E:compromise} by the local relation between~$\blambda^\star(\bs)$ and~$h(\bs)$ in~\eqref{E:counterfactual}~(red arrows in Fig.~\ref{F:counterfactual}). Effectively, it states that the dual variables~$\blambda^\star(\bs)$ of~\eqref{P:relaxed} can be used to determine~$\bs^\dagger$ without solving it for different specifications. Indeed, \eqref{E:counterfactual} provides counterfactual of the form ``if $\nabla h(\bs)$  had been a dual variable of~\eqref{P:relaxed}, then~$p^\star(\bs)$ and~$h(\bs)$ would have obeyed~\eqref{E:compromise}.'' For~\cite{Pearl09c}, $\nabla h(\bs) = \blambda^\star(\bs)$ is the ``surgical intervention'' used to modify the causal model in Fig.~\ref{F:counterfactual} to evaluate this counterfactual. We could then state that~$\nabla h(\bs) = \blambda^\star(\bs)$ causes~\eqref{E:compromise}. Theorem~\ref{T:counterfactual} also provides a backtracking counterfactual of the form ``if~$p^\star(\bs)$ and $h(\bs)$ were to obey~\eqref{E:compromise}, then~$\nabla h(\bs) = \blambda^\star(\bs)$.'' By enforcing~\eqref{E:counterfactual} as we solve~\eqref{P:relaxed}, we therefore simultaneously obtain~$\bs^\dagger$ and~$p^\star(\bs^\dagger)$, i.e., solve~\eqref{P:relaxed} for the compromise~\eqref{E:compromise}. In the sequel, we derive a method to do so based on a modified Arrow-Hurwicz algorithm~\citep{Arrow58s}.

\subsection{A modified Arrow-Hurwicz algorithm}
\label{S:arrow}

Theorem~\ref{T:counterfactual} suggests a way to exploit the counterfactual information in the dual variables~$\blambda$ to directly obtain a solution of~\eqref{P:relaxed} for the optimal slack~$\bs^\star$ without ever solving~\eqref{P:relaxed}. Indeed, observe that due to Assumptions~\ref{assumption_convexity} and~\ref{assumption_slater}, it holds that the primal-dual solution~$(\bx^\star(\bs),\blambda^\star(\bs))$ is a saddle point of the Lagrangian~\eqref{eqn_lagrangian}~\citep[Section 5.4.2]{Boyd04c}, i.e.,
\begin{equation}\label{eqn_saddle_point_1}
	\calL(\bx^\star(\bs),\blambda,\bs) \leq \calL(\bx^\star(\bs),\blambda^\star(\bs),\bs)
		\leq \calL(\bx,\blambda^\star(\bs),\bs)
\end{equation}
for all~$\bx \in \setR^n$, $\blambda \in \setR_+^m$, and~$\bs$ such that Assumption~\ref{assumption_slater} holds. In the sequel, we put forward a procedure to find points that satisfy both~\eqref{E:counterfactual} and~\eqref{eqn_saddle_point_1}.

Start by considering the classic Arrow-Hurwicz algorithm for solving~\eqref{P:relaxed} when the slacks~$\bs$ are constant~\citep{Arrow58s}. This method seeks a saddle point as in~\eqref{eqn_saddle_point_1} by updating the primal and dual variables using gradients of the Lagrangian~\eqref{eqn_lagrangian}. Explicitly, the primal variables~$\bx$ are updated by descending along the negative gradient of the Lagrangian
\begin{equation}\label{eqn_gradient_descent}
	\dot{\bx} = -\nabla_{\bx} \calL(\bx,\blambda,\bs) = 
		- \left( \nabla f_0(\bx)+ \sum_{i=1}^m \lambda_i \nabla f_i(\bx) \right)
		\text{,}
  \end{equation}
and the dual variables~$\blambda$ are updated by ascending along the gradient of the Lagrangian as in
\begin{equation}\label{eqn_gradient_ascent}
	\dot{\blambda} =
		\Pi_{\mathbb{R}^m_+} \left[ \blambda,\nabla_{\blambda} \calL(\bx,\blambda,\bs) \right]
		= \Pi_{\mathbb{R}^m_+}\left[ \blambda,f_i(\bx) - s_i \right],
\end{equation}
where~$\Pi_{\setR_+^m}$ refers to a projected dynamical system over the positive orthant of~$\setR^m$~\citep{Nagurney96p}. This projection is introduced to ensure that the Lagrange multipliers are non-negative.

\begin{algorithm}[t]
\begin{minipage}[b]{0.55\columnwidth}
\begin{algorithm}[H]
	Let $\bx^{(0)} = \bzero$, $\blambda^{(0)} = \ones$, $\bs^{(0)} = \ones$,
		and~$0 < \eta \ll 1$.

	\For{$t = 1,2,\dots$}{
	
		$\displaystyle
			\bg_x^{(t)} = \nabla f_0\left( \bx^{(t-1)} \right)
				+ \sum_{i=1}^m \lambda_i^{(t-1)} \nabla f_i\left( \bx^{(t-1)} \right)$
		\\
		$\displaystyle
			\bx^{(t)} = \bx^{(t-1)} - \eta \bg_x^{(t)}$
		\\[6pt]
		$\displaystyle
			\lambda_i^{(t)} = \left[ \lambda_i^{(t-1)}
				+ \eta \left( f_i\left( \bx^{(t-1)} \right) - s_i^{(t-1)} \right) \right]_+$
	}
\caption{Counterfactual optimization algorithm}
	\label{A:saddle}
\end{algorithm}
\end{minipage}
\hfill
\begin{minipage}[b]{0.42\columnwidth}
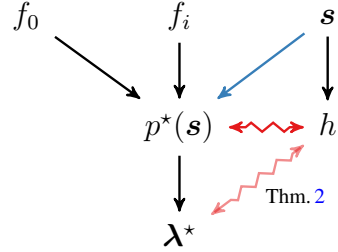
\begin{figure}[H]
\centering
\large
\tikzcdset{arrow style=tikz, diagrams={>=stealth'}}
\begin{tikzcd}[ampersand replacement=\&, arrows={line width=0.9pt}]
	f_0 \ar[rd] \& f_i \ar[d]
		\& \bs \ar[ld, Set1.blue] \ar[d]
	\\
	\& p^\star(\bs) \ar[d]
	\& h \ar[l, leftrightsquigarrow, Set1.red]
		\ar[leftrightsquigarrow, Set1.red, opacity=0.5]{dl}%
			[black, opacity=1]{\textup{Thm.\,\ref{T:counterfactual}}}
	\\
	\& \blambda^\star \& 
\end{tikzcd}
\vspace{12pt}
\caption{Causal diagram}
	\label{F:counterfactual}
\end{figure}
\end{minipage}
\end{algorithm}

To understand the intuition behind this algorithm, observe that the primal variable is updated in~\eqref{eqn_gradient_descent} by descending along a weighted combination of gradients from the objective and the constraints so as to reduce the value of all functions. The value of the weight of each constraint is given by its respective dual variable~$\lambda_i$. If constraint~$i$ is satisfied, its Lagrangian multiplier is zero, so that its influence on the primal update is decreased by~\eqref{eqn_gradient_ascent}. On the other hand, if this constraint is violated, then~$f_i(\bx) - s_i > 0$ and the value of the corresponding multiplier is increased. The relative strength of each gradient in the primal update~\eqref{eqn_gradient_descent} is therefore related to the history of violation of each constraint.

The main drawback of the Arrow-Hurwicz dynamics as they stand is that they solve~\eqref{P:relaxed} for a fixed slack~$\bs$. However, the compromise~$\bs^\dagger$ in~\eqref{E:compromise} is unknown \emph{a priori}. To overcome this limitation, we use~\eqref{E:counterfactual} to replace~\eqref{eqn_gradient_ascent} by
\begin{equation}\label{eqn_slacks}
	\dot{\blambda}
		= \Pi_{\mathbb{R}^m_+}\left[ \blambda,f_i(\bx) - (\nabla h)^{-1}(\blambda) \right]
		\text{.}
\end{equation}
The inverse of~$\nabla h$ exists since~$h$ is strongly convex~(Assumption~\ref{assumption_convexity}). Note that~\eqref{eqn_slacks} takes~$\bs = (\nabla h)^{-1}(\blambda)$, i.e., it enforces that the specifications of~\eqref{P:relaxed} satisfy~\eqref{E:counterfactual}. Hence, \eqref{eqn_gradient_descent}--\eqref{eqn_slacks} update the primal/dual variables and specifications~$\bs$ such as to solve~\eqref{P:relaxed} directly for~$\bs^\dagger$ in~\eqref{E:compromise}. A discretized version of the counterfactual optimization method is shown in Algorithm~\ref{A:saddle}.

The dynamics~\eqref{eqn_gradient_descent}--\eqref{eqn_slacks} can be shown to converge to a point that satisfies the saddle point relation~\eqref{eqn_saddle_point_1} as well as the left-hand side of~\eqref{E:counterfactual} using an argument similar to~\citep{Cherukuri16a}. From Theorem~\ref{T:counterfactual}, they therefore simultaneously obtain the specification~$\bs^\dagger$ that satisfies~\eqref{E:compromise} and the solution~$\bz^\star(\bs^\star)$ of~\eqref{P:relaxed} that achieve~$p^\star(\bs^\dagger)$. Due to space constraints, details of this proof are left for a future version of this manuscript.

\section{SPECIFYING CONTROLLERS FOR UNKNOWN DYNAMICAL SYSTEMS}
\label{S:sims_lqr}

Control systems must often trade off control input cost~(or energy) and regulation~\citep{Anderson07o, Bertsekas17d}. The compromise between these objectives depends on the underlying dynamical system, operating conditions, and goals of the agent. What is more, this compromise may need to be revised to accommodate changes in the operating conditions, due to non-stationary environment and model uncertainty. Typically, this trade off is adjusted using domain expert knowledge of the problem. Autonomous systems, however, must be able to automatically tune these specifications without human intervention, a feature especially critical in online applications~[e.g., MPC~\citep{Borrelli17p, Rawlings17m}].

\begin{figure}[tb]

\begin{minipage}[c]{0.48\columnwidth}
\centering
\includesvg[width=\columnwidth]{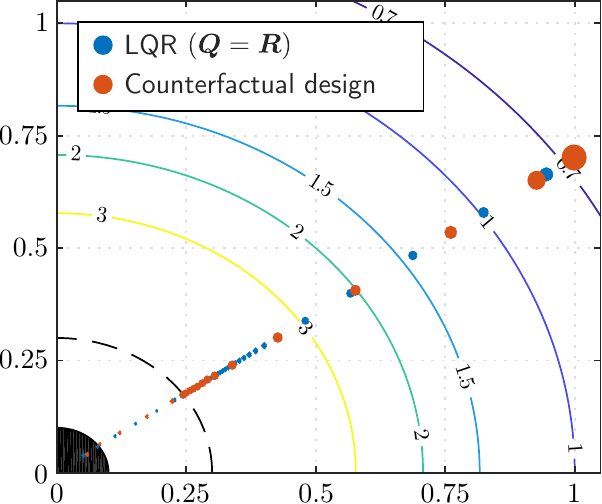}
\end{minipage}
\hfill
\begin{minipage}[c]{0.5\columnwidth}
\centering
\includesvg[width=0.98\columnwidth]{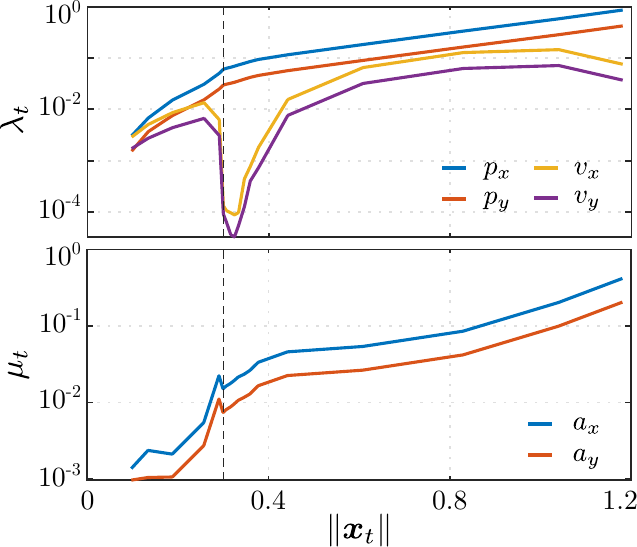}
\end{minipage}

\caption{(a)~Trajectory of agents in an environment with position-dependent friction~(markers size are proportional to the input energy used at each step): LQR~\eqref{P:lqr}~(blue curve) and counterfactual controller~\eqref{P:cf_lqr}~(yellow curve). (b)~Counterfactual specifications of~\eqref{P:cf_lqr}.}
\label{F:friction_iso}

\end{figure}

This problem can be tackled using counterfactual optimization by formulating it in the language of compromise from Section~\ref{S:problem}. To do so, we note that the ideal, yet infeasible, controller specification is one that regulates the system in one step without acting on it. Explicitly, it is the nominal specification of
\begin{prob}\label{P:cf_lqr}
	\text{find}& &&\bu_1, \dots, \bu_T
	\\
	\text{such that}& &&\left( [\bx_t]_i \right)^2 \leq s_{x,i}
		\text{,} &i &= 1,\dots,\ell
	\\
	&&&\left( [\bu_t]_j \right)^2 \leq s_{u,j}
		\text{,} &j &= 1,\dots,p
	\\
	&&&\bx_t = \bA \bx_{t-1} + \bB \bu_t
		\text{,} &t &= 1,\dots,T
\end{prob}
where~$\bx_t \in \setR^\ell$ is the state vector, $\bu_t \in \setR^p$ are the control actions, and~$g_t$ describes the dynamics of the system at time~$t$. We write~$[\bx]_i$ to denote the~$i$-th entry of the vector~$\bx$. The initial state~$\bx_0 \in \setR^n$ is assumed to be given. When~$g_t$ describes linear dynamics, this problem is convex and can be written as in~\eqref{P:relaxed} by taking~$f_0(\bx) = 0$. Notice that the nominal specification of~\eqref{P:cf_lqr} for which~$\bs_x = \bs_u = \bzero$ is such that~$\norm{\bx_t} = \norm{\bu_t} = 0$. Thus, unless~$\bx_0 = \bzero$, \eqref{P:cf_lqr} is infeasible for these specifications. Algorithm~\ref{A:saddle} can then be used to tune~$(\bs_x,\bs_u)$ to trade-off regulation and input energy specifications. In the sequel, we illustrate this procedure in the context of navigating an unknown terrain.

Consider an agent navigating an unknown terrain, modeled as a position-dependent friction coefficient. The states of the underlying dynamical system are~$\bx = \vect{cccc}{\bp^T & \bv^T}^T$, describing the position~$\bp = \vect{cc}{p_x & p_y}^T$ and velocities~$\bv = \vect{cc}{v_x & v_y}^T$ of the agent, and its control inputs are accelerations collected in~$\bu = \vect{cc}{a_x & a_y}^T$. Its state-space representation is~$\dot{\bx} = \bA \bx + \bB \bu$ for
\begin{equation}\label{E:stateSpaceCart}
	\bA = \begin{bmatrix}
	0 & 0 & 1 & 0
	\\
	0 & 0 & 0 & 1
	\\
	0 & 0 & -\gamma(\bp) & 0
	\\
	0 & 0 & 0 & -\gamma(\bp)
	\end{bmatrix}
	\text{, }
	\bB = \begin{bmatrix}
	0 & 0 
	\\
	0 & 0 
	\\
	1 & 0 
	\\
	0 & 1
	\end{bmatrix}
	\text{, and }
	\gamma(\bp) = \begin{cases}
		\norm{\bp}^{-2} \text{,} &\norm{\bp} > 0.3
		\\
		0 \text{,} &\norm{\bp} \leq 0.3
	\end{cases}
	\text{,}
\end{equation}
where~$\gamma(\bp)$ is the gyroscopic friction coefficient at coordinates~$\bp$. In other words, the terrain becomes harder to navigate as the agent approaches the origin~(the goal), until we reach a slippery region in which there is no friction. We assume that~$\gamma$ can be measured by the agent at its current position only, i.e., no prior information is available about its value in the environment. In the sequel, we consider a discretized version of~\eqref{E:stateSpaceCart} with sampling time~$T_s = 0.5$~s.

To deal with changes in the environment, an MPC controller is used by planning for a horizon~$T = 3$~iterations, but applying only the first action. Since the agent only has access to local information on the environment, the actions are planned assuming that the state transition matrix~$\bA$ is constant and that the gyroscopic friction is that of the current position. Hence, \eqref{P:cf_lqr} is a strongly convex program. For comparison, Figure~\ref{F:friction_iso}a~(blue curve) displays the trajectory obtained for the classical LQR
\begin{prob}\label{P:lqr}
	\minimize& &&\sum_{t = 1}^T \left( \bx_t^T \bQ_t \bx_t
		+ \bu_t^T \bR_t \bu_t \right)
	\\
	\subjectto& &&\bx_t = \bA \bx_{t-1} + \bB \bu_t
		\text{,} \quad t = 1,\dots,T
		\text{,}
\end{prob}
with~$\bQ_t = \bI$ and~$\bR_t = \bI$. Notice that as the agent approaches the origin while outside the slippery region, it begins to move slowly, i.e., taking small steps. This occurs because the relative importance between regulation and input energy is set independently of the local friction coefficient in~\eqref{P:lqr}. The agent is therefore unwilling to spend the extra energy needed to overcome the high friction region faster. Thus, it takes~$53$~iterations~(approximately~$26$~s) to reach~$\norm{\bx_t} \leq 0.1$. Though in certain applications this is the desired behavior, i.e., there is no room for trading off input energy and regulation, it is clear that agents operating in dynamic conditions can benefit from being allowed to autonomously tune their specifications. Indeed, the trajectory obtained by counterfactually solving~\eqref{P:cf_lqr} using~$h(\bs) = \norm{\bs}^2$~(Figure~\ref{F:friction_iso}a, yellow curve) attains~$\norm{\bx_t} \leq 0.1$ in~$19$~iterations spending~$\sum_t \norm{\bu_t}^2 = 1.5$. By modifying the relative costs in~\eqref{P:lqr} using~$\bQ_t = \gamma \bI$, the same completion time can be achieved using~$\sum_t \norm{\bu_t}^2 = 1.81$~($\gamma = 3.3$), showing the price of using a fixed trade-off in a dynamic scenario.

Interestingly, it turns out that~$\bQ_t$ and~$\bR_t$ can be tuned so as to achieve the same performance as the controller obtained from~\eqref{P:cf_lqr} due to the fact that~\eqref{P:cf_lqr} is related to~\eqref{P:lqr} by Lagrangian duality. Indeed, the Lagrangian of~\eqref{P:cf_lqr} is given by
\begin{equation}\label{E:cf_lqr_equivalence}
\begin{aligned}
	\calL(\bx_t,\bu_t,\blambda_t,\bmu_t) &= \sum_{t = 1}^T \Bigg[
		\sum_{i = 1}^\ell \lambda_{t,i} \left( \left( [\bx_t]_i \right)^2 - s_{x,i} \right)
	+ \sum_{j = 1}^p \mu_{t,j} \left( \left( [\bu_t]_j \right)^2 - s_{u,j} \right)
	\Bigg]
	\\
	{}&= \sum_{t = 1}^T \left( \bx_t^T \bLambda_t \bx_t
		+ \bu_t^T \bM_{t} \bu_t^T
		- \blambda_t^T \bs_{x} - \bmu_t^T \bs_{u}
	\right)
		\text{,}
\end{aligned}
\end{equation}
where~$\bLambda_t = \diag(\lambda_{t,i})$ and~$\bM_t = \diag(\mu_{t,j})$. It is straightforward from~\eqref{E:cf_lqr_equivalence} that if~$(\bx_t^\star,\bu_t^\star)$ is a solution of~\eqref{P:lqr} with~$\bQ_t = \bLambda_t$ and~$\bR_t = \bM_t$, then~$(\bx_t^\star,\bu_t^\star) = \argmin_{(\bx_t,\bu_t)} \calL(\bx_t,\bu_t,\blambda_t,\bmu_t)$ for the Lagrangian in~\eqref{E:cf_lqr_equivalence}. Using an appropriate sequence of~$\bQ_t$ and~$\bR_t$ in~\eqref{P:lqr} therefore yields the same sequence of control actions as counterfactually solving~\eqref{P:cf_lqr}. Nevertheless, even with complete knowledge of the environment, manually selecting this sequence is a challenging task as the weights do not have a straightforward behavior~(Figure~\ref{F:friction_iso}b).

\bibliography{IEEEabrv,af,bayes,control,gsp,math,ml,rkhs,rl,sp,stat}

\begin{thebibliography}{31}
\providecommand{\natexlab}[1]{#1}
\providecommand{\url}[1]{\texttt{#1}}
\expandafter\ifx\csname urlstyle\endcsname\relax
  \providecommand{\doi}[1]{doi: #1}\else
  \providecommand{\doi}{doi: \begingroup \urlstyle{rm}\Url}\fi

\bibitem[Anderson and Moore(2007)]{Anderson07o}
B.D.O. Anderson and J.B. Moore.
\newblock \emph{Optimal Control: {L}inear Quadratic Methods}.
\newblock Dover, 2007.

\bibitem[Arrow et~al.(1958)Arrow, Hurwicz, and Uzawa]{Arrow58s}
K.J. Arrow, L.~Hurwicz, and H.~Uzawa.
\newblock \emph{Studies in linear and non-linear programming}.
\newblock Stanford University Press, 1958.

\bibitem[Bertsekas(2009)]{Bertsekas09c}
D.P. Bertsekas.
\newblock \emph{Convex Optimization Theory}.
\newblock Athena Scientific, 2009.

\bibitem[Bertsekas(2017)]{Bertsekas17d}
D.P. Bertsekas.
\newblock \emph{Dynamic Programming and Optimal Control -- {Vol.~I}}.
\newblock Athena Scientific, 2017.

\bibitem[Borrelli et~al.(2017)Borrelli, Bemporad, and Morari]{Borrelli17p}
F.~Borrelli, A.~Bemporad, and M.~Morari.
\newblock \emph{Predictive Control for Linear and Hybrid Systems}.
\newblock Cambridge University Press, 2017.

\bibitem[Boyd and Vandenberghe(2004)]{Boyd04c}
S.~Boyd and L.~Vandenberghe.
\newblock \emph{Convex optimization}.
\newblock Cambridge University Press, 2004.

\bibitem[Caillau et~al.(2018)Caillau, Cerf, Sassi, Tr{\'e}lat, and
  Zidani]{Caillau18s}
J-B Caillau, Max Cerf, Achille Sassi, Emmanuel Tr{\'e}lat, and Hasnaa Zidani.
\newblock Solving chance constrained optimal control problems in aerospace via
  kernel density estimation.
\newblock \emph{Optimal Control Applications and Methods}, 39\penalty0
  (5):\penalty0 1833--1858, 2018.

\bibitem[Cherukuri et~al.(2016)Cherukuri, Mallada, and
  Cort\'{e}s]{Cherukuri16a}
A.~Cherukuri, E.~Mallada, and J.~Cort\'{e}s.
\newblock Asymptotic convergence of constrained primal--dual dynamics.
\newblock \emph{Systems \& Control Letters}, 87:\penalty0 10--15, 2016.

\bibitem[Das and Dennis(1998)]{Das98n}
I.~Das and J.E. Dennis.
\newblock Normal-boundary intersection: {A} new method for generating the
  {P}areto surface in nonlinear multicriteria optimization problems.
\newblock \emph{SIAM Journal on Optimization}, 8[3]:\penalty0 631--657, 1998.

\bibitem[Ehrgott(2005)]{Ehrgott05m}
M.~Ehrgott.
\newblock \emph{Multicriteria Optimization}.
\newblock Springer, 2005.

\bibitem[Geibel and Wysotzki(2005)]{Geibel05r}
Peter Geibel and Fritz Wysotzki.
\newblock Risk-sensitive reinforcement learning applied to control under
  constraints.
\newblock \emph{Journal of Artificial Intelligence Research}, 24:\penalty0
  81--108, 2005.

\bibitem[Gharaibeh et~al.(2017)Gharaibeh, Salahuddin, Hussini, Khreishah,
  Khalil, Guizani, and Al-Fuqaha]{Gharaibeh17s}
Ammar Gharaibeh, Mohammad~A Salahuddin, Sayed~Jahed Hussini, Abdallah
  Khreishah, Issa Khalil, Mohsen Guizani, and Ala Al-Fuqaha.
\newblock Smart cities: A survey on data management, security, and enabling
  technologies.
\newblock \emph{IEEE Communications Surveys \& Tutorials}, 19\penalty0
  (4):\penalty0 2456--2501, 2017.

\bibitem[Howard and Matheson(1972)]{Howard72r}
Ronald~A Howard and James~E Matheson.
\newblock Risk-sensitive {M}arkov decision processes.
\newblock \emph{Management Science}, 18\penalty0 (7):\penalty0 356--369, 1972.

\bibitem[Johansson(1993)]{Johansson93s}
Rolf Johansson.
\newblock \emph{System modeling and identification}, volume~1.
\newblock Prentice Hall Englewood Cliffs, NJ, 1993.

\bibitem[Kober et~al.(2013)Kober, Bagnell, and Peters]{Kober13r}
Jens Kober, J~Andrew Bagnell, and Jan Peters.
\newblock Reinforcement learning in robotics: {A} survey.
\newblock \emph{The International Journal of Robotics Research}, 32\penalty0
  (11):\penalty0 1238--1274, 2013.

\bibitem[Kokotovi{\'c}(1991)]{Kokotovic91f}
Petar~V Kokotovi{\'c}.
\newblock \emph{Foundations of adaptive control}.
\newblock Springer, 1991.

\bibitem[Lewis(1974)]{Lewis74c}
D.~Lewis.
\newblock Causation.
\newblock \emph{The Journal of Philosophy}, 70\penalty0 (17):\penalty0
  556--567, 1974.

\bibitem[Li et~al.(2000)Li, Wendt, and Wozny]{Li00r}
P.~Li, M.~Wendt, and G.~Wozny.
\newblock Robust model predictive control under chance constraints.
\newblock \emph{Computers \& Chemical Engineering}, 24[2-7]:\penalty0 829--834,
  2000.

\bibitem[Messac et~al.(2003)Messac, {Ismail-Yahaya}, and Mattson]{Messac03t}
A.~Messac, A.~{Ismail-Yahaya}, and C.A. Mattson.
\newblock The normalized normal constraint method for generating the {P}areto
  frontier.
\newblock \emph{Structural and Multidisciplinary Optimization}, 25[2]:\penalty0
  86--98, 2003.

\bibitem[Miettinen(1998)]{Miettinen98n}
K.~Miettinen.
\newblock \emph{Nonlinear Multiobjective Optimization}.
\newblock Springer, 1998.

\bibitem[{Mueller-Gritschneder} et~al.(2009){Mueller-Gritschneder}, Graeb, and
  Schlichtmann]{Mueller-Gritschneder09a}
D.~{Mueller-Gritschneder}, H.~Graeb, and U.~Schlichtmann.
\newblock A successive approach to compute the bounded {P}areto front of
  practical multiobjective optimization problems.
\newblock \emph{SIAM Journal on Optimization}, 20[2]:\penalty0 915--934, 2009.

\bibitem[Nagurney and Zhang(1996)]{Nagurney96p}
A.~Nagurney and D.~Zhang.
\newblock \emph{Projected Dynamical Systems and Variational Inequalities with
  Applications}.
\newblock Springer, 1996.

\bibitem[Ono et~al.(2010)Ono, Blackmore, and Williams]{Ono10c}
Masahiro Ono, Lars Blackmore, and Brian~C Williams.
\newblock Chance constrained finite horizon optimal control with nonconvex
  constraints.
\newblock In \emph{Proceedings of the 2010 American Control Conference}, pages
  1145--1152. IEEE, 2010.

\bibitem[Paternain et~al.(2019{\natexlab{a}})Paternain, Calvo-Fullana, Chamon,
  and Ribeiro]{Paternain19s}
Santiago Paternain, Miguel Calvo-Fullana, Luiz~FO Chamon, and Alejandro
  Ribeiro.
\newblock Safe policies for reinforcement learning via primal-dual methods.
\newblock \emph{arXiv preprint arXiv:1911.09101}, 2019{\natexlab{a}}.

\bibitem[Paternain et~al.(2019{\natexlab{b}})Paternain, Chamon, Calvo-Fullana,
  and Ribeiro]{Paternain19c}
Santiago Paternain, Luiz~F.O. Chamon, Miguel Calvo-Fullana, and Alejandro
  Ribeiro.
\newblock Constrained reinforcement learning has zero duality gap.
\newblock In \emph{Advances in Neural Information Processing Systems}, pages
  7553--7563, 2019{\natexlab{b}}.

\bibitem[Pearl(2009)]{Pearl09c}
J.~Pearl.
\newblock \emph{Causality: {M}odels, Reasoning and Inference}.
\newblock Cambridge University Press, 2009.

\bibitem[Rawlings et~al.(2017)Rawlings, Mayne, and Diehl]{Rawlings17m}
J.B. Rawlings, D.Q. Mayne, and M.M. Diehl.
\newblock \emph{Model Predictive Control: {T}heory, Computation, and Design}.
\newblock Nob Hill Publishing, 2017.

\bibitem[Schwarm and Nikolaou(1999)]{Schwarm99c}
A.T. Schwarm and M.~Nikolaou.
\newblock Chance-constrained model predictive control.
\newblock \emph{AIChE Journal}, 45[8]:\penalty0 1743--1752, 1999.

\bibitem[Shapiro(2000)]{Shapiro00d}
A.~Shapiro.
\newblock Duality, optimality conditions, and perturbation analysis.
\newblock In H.~Wolkowicz, R.~Saigal, and L.~Vandenberghe, editors,
  \emph{Handbook of Semidefinite Programming: {T}heory, Algorithms, and
  Applications}, pages 67--91. Springer, 2000.

\bibitem[Sutton and Barto(2018)]{Sutton18r}
Richard~S Sutton and Andrew~G Barto.
\newblock \emph{Reinforcement learning: {A}n introduction}.
\newblock MIT press, 2018.

\bibitem[Woodward(2005)]{Woodward05m}
J.~Woodward.
\newblock \emph{Making things happen: {A} theory of causal explanation}.
\newblock Oxford University Press, 2005.

\end{thebibliography}

\end{document}